\newcommand{\R}{\mathbb{R}}
\newcommand{\Z}{\mathbb{Z}}
\newcommand{\N}{\mathbb{N}}
\newcommand{\C}{\mathbb{C}}
\newtheoremstyle{thm}{}{}{\itshape}{}{\bfseries}{}{ }{} 
\newtheoremstyle{definition}{}{}{}{}{\bfseries}{}{ }{} 
\theoremstyle{thm}
\newtheorem{Theorem}{Theorem}[section]
\newtheorem{thm}[Theorem]{Theorem}
\newtheorem{lem}[Theorem]{Lemma}
\newtheorem{prop}[Theorem]{Proposition}
\newtheorem{cor}[Theorem]{Corollary}
\newtheorem*{Theorem-ohne}{Theorem}
\theoremstyle{definition}
\newtheorem{rem}[Theorem]{Remark}
\newtheorem{ex}[Theorem]{Example}
\definecolor{amaranth}{rgb}{0.9, 0.17, 0.31} 
\definecolor{carrotorange}{rgb}{0.93, 0.57, 0.13} 
\definecolor{citrine}{rgb}{0.89, 0.82, 0.04} 
\definecolor{dartmouthgreen}{rgb}{0.05, 0.5, 0.06} 
\definecolor{ballblue}{rgb}{0.13, 0.67, 0.8} 
\definecolor{ceruleanblue}{rgb}{0.16, 0.32, 0.75} 
\definecolor{amethyst}{rgb}{0.6, 0.4, 0.8} 
\definecolor{amber}{rgb}{1.0, 0.75, 0.0} 
\definecolor{burlywood}{rgb}{0.87, 0.72, 0.53} 
\numberwithin{equation}{section}
\begin{document}


\title{The classification of surfaces via normal curves} 

\author{Fethi Ayaz}
\address{Humboldt-Universit\"at zu Berlin, Rudower Chaussee 25, 12489 Berlin, Germany.}
\email{fethi.oemer.ayaz@cms.hu-berlin.de, fethi.o.ayaz@hotmail.com}

\author{Marc Kegel}
\address{Humboldt-Universit\"at zu Berlin, Rudower Chaussee 25, 12489 Berlin, Germany.}
\email{kegemarc@math.hu-berlin.de, kegelmarc87@gmail.com}

\author{Klaus Mohnke}
\address{Humboldt-Universit\"at zu Berlin, Rudower Chaussee 25, 12489 Berlin, Germany.}
\email{mohnke@mathematik.hu-berlin.de}


\date{\today} 

\begin{abstract}
We present a simple proof of the surface classification theorem using normal curves. This proof is analogous to Kneser's and Milnor's proof of the existence and uniqueness of the prime decomposition of $3$-manifolds. In particular, we do not need any invariants from algebraic topology to distinguish surfaces.
\end{abstract}

\keywords{Classification of surfaces, normal curves, prime decomposition of manifolds}

\makeatletter
\@namedef{subjclassname@2020}{%
  \textup{2020} Mathematics Subject Classification}
\makeatother

\subjclass[2020]{57M25; 57R65, 57M12} 

\maketitle


\section{Introduction}

One of the central problems in topology asks for the classification of manifolds in a fixed dimension, where often the simplifying assumption is imposed that the manifolds are connected, orientable and closed (i.e. compact and without boundary). While it is not hard to show that any $1$-manifold is homeomorphic to the $1$-dimensional circle $S^1$ (we refer to~\cite[Section~7.1]{Fr} for a detailed proof, a different proof for smooth $1$-manifolds is given in~\cite{Mi97}) the first non-trivial classification result is the classification of surfaces. Throughout this paper, we will use the word surface to describe a closed, connected, orientable, triangulated, and smooth $2$-dimensional manifold.

\begin{thm}\label{thm:class}
For any surface $F$ there exist a unique non-negative integer $g$, the \textbf{genus} of $F$, such that $F$ is diffeomorphic to $\#_g T^2$, the $g$-fold connected sum of $2$-tori. 
\end{thm}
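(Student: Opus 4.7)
The plan is to mimic Kneser's and Milnor's proof of the prime decomposition of $3$-manifolds one dimension down, with essential simple closed curves in a triangulated surface playing the role of essential $2$-spheres in a triangulated $3$-manifold. Call a surface $F$ \emph{prime} if every separating simple closed curve in $F$ bounds a disk. Theorem~\ref{thm:class} then splits into two tasks: (i) every surface decomposes as a connected sum of prime surfaces, and (ii) the only prime orientable surfaces are $S^{2}$ and $T^{2}$, with the number of $T^{2}$-summands independent of the decomposition.

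For existence I would fix a triangulation $\tau$ of $F$ and place simple closed curves in \emph{normal position}, i.e.\ meeting each $2$-simplex in a disjoint union of normal arcs, by minimising the weight $|\gamma \cap \tau^{(1)}|$ within an isotopy class. Provided $F \not\cong S^{2}$, a pigeonhole-style combinatorial argument on the triangulation produces an essential normal curve $\gamma$. Since $F$ is orientable, $\gamma$ is two-sided, so cutting $F$ open along $\gamma$ and capping off the two resulting boundary circles with disks produces a surface $F'$ (possibly disconnected) together with a connected-sum decomposition $F \cong F' \# T^{2}$ if $\gamma$ is non-separating, or $F \cong F_{1} \# F_{2}$ with $F_{1}, F_{2} \not\cong S^{2}$ if $\gamma$ separates. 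A combinatorial complexity, for example the number of triangles of the induced triangulation, strictly drops under this move, so after finitely many steps every piece is prime and hence, by~(ii), a sphere or a torus; discarding the sphere summands yields $F \cong \#_{g} T^{2}$.

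The main obstacle is uniqueness, since by hypothesis one may not appeal to $\chi$, $H_{\ast}$, or $\pi_{1}$ to distinguish the surfaces $\#_{g}T^{2}$. Here I would follow Kneser--Milnor: given two witnesses $F \cong \#_{g} T^{2}$ and $F \cong \#_{h} T^{2}$, represent them by two systems $\Gamma$ and $\Gamma'$ of pairwise disjoint essential simple closed curves whose complements become unions of spheres after capping. Placing $\Gamma$ and $\Gamma'$ in general position and minimising $|\Gamma \cap \Gamma'|$, the crucial step is that any innermost intersection bigon can be removed by an ambient isotopy (the surface analogue of Haken's innermost-disc move in normal surface theory), forcing the minimiser to satisfy $\Gamma \cap \Gamma' = \emptyset$. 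A final inspection of the planar pieces cut out by $\Gamma \sqcup \Gamma'$ then matches the curves of $\Gamma$ bijectively with those of $\Gamma'$, giving $g = h$. The delicate point, and the most likely location of a new idea, is keeping this last bijection argument purely combinatorial rather than smuggling in a homological invariant through the back door.
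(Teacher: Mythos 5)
Your high-level plan (normal curves, prime decomposition by cut-and-cap, classification of primes, intersection-minimisation for uniqueness) matches the paper's framework, but two steps that you describe as routine are exactly where the real work sits, and both have gaps as written.

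For existence, the assertion that ``the number of triangles of the induced triangulation strictly drops'' under cut-and-cap is unjustified: cutting a triangulated surface along a normal curve produces pieces whose cells are not simplices, and after capping the boundary circles one must re-triangulate, which can only increase the triangle count. The paper's termination argument is different and is the genuine Kneser idea: fix a system of $g$ disjoint, pairwise non-isotopic, essential separating normal curves; each of the $n$ triangles is cut into ``good'' quadrilaterals and at most four exceptional pieces, so there are at most $4n$ exceptional pieces in total; if $g\geq 4n$ then, by pigeonhole, one of the $g+1$ complementary regions consists only of quadrilaterals and hence is an annulus between two of the curves, contradicting non-isotopy. This bounds the number of non-sphere summands in any decomposition of $F$ a priori, with no induction on a triangulation of the quotient. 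You also assert without argument that the only orientable primes are $S^2$ and $T^2$; this is Proposition~\ref{prop:prime_surfaces} in the paper and requires the polygonal-description argument given there.

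For uniqueness, the claim that ``any innermost intersection bigon can be removed by an ambient isotopy, forcing the minimiser to satisfy $\Gamma\cap\Gamma'=\emptyset$'' is false: two essential separating curves on a genus-two surface can intersect in four points with no bigons, so the minimum of $|\Gamma\cap\Gamma'|$ over isotopy need not be zero. Disjoining the two systems, as Kneser and Milnor do for spheres, requires \emph{replacing} curves by surgered copies along outermost arcs, and one must then show the surgered system still witnesses the same genus-$g$ decomposition -- a delicate bookkeeping step (the authors in fact tried and then abandoned precisely this route). The paper's Section~\ref{sec:uniqueness} sidesteps intersection counting entirely: pick a non-separating curve $c$ inside one $T^2$-summand of the first decomposition and $c'$ inside one of the second, invoke the change-of-coordinates fact that some diffeomorphism of $F$ carries $c$ to $c'$ (Lemma~\ref{lem:Dehn}), cut along these curves, cap off using the Alexander trick (Lemma~\ref{lem:Alex}), and conclude $\#_{g-1}T^2\cong\#_{g'-1}T^2$; the base case $g=0\Leftrightarrow g'=0$ follows from Schoenflies. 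This is both correct and considerably simpler than the route you sketch.
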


Where in the above theorem $\#_0 T^2$ is defined to be the $2$-sphere $S^2$ and $\#_1 T^2$ just denotes a $2$-torus $T^2=S^1\times S^1$. Theorem~\ref{thm:class} is often attributed to Rad\'o~\cite{Ra26}, who proved that any topological $2$-manifold can be triangulated, cf.~\cite{Mo77}. Given a triangulated surface $F$ it is not difficult to actually construct a smooth structure on $F$. For a direct proof (not using any triangulation) that any surface admits a smooth structure, we refer to~\cite{Ha13} where the smooth structure is constructed using Kirby's torus trick which is one of the main ingredients to study the existence of PL structures on higher dimensional manifolds~\cite{KS77}. Moreover, it is known that every homeomorphism between smooth surfaces is isotopic to a diffeomorphism, see~\cite{Ha13} for a modern proof. It follows that Theorem~\ref{thm:class} remains also true for topological surfaces.

In the classical proof of Theorem~\ref{thm:class} one chooses a triangulation of a given surface $F$ and modifies this triangulation without changing the diffeomorphism type of the underlying surface by cut and paste topology into a standard form from which one can read-off that $F$ is diffeomorphic to a connected sum of tori, see for example the classical~\cite{ST34}.

A modern, more elegant, approach is given by using handle decompositions (as done in~\cite{Ge17} or~\cite[Chapter~108]{Fr}) which need the surface to admit a smooth structure. A similar approach is taken in Conway's ZIP proof~\cite{FW99}. 

To conclude the proof in both approaches, in the classical and the modern approach, one needs to argue that $\#_g T^2$ and $\#_{g'} T^2$ are not homeomorphic if $g\neq g'$. This can be achieved by computing invariants from algebraic topology such as the Euler characteristic, the (abelization of the) fundamental group or the homology groups.

In this note, we will discuss a different proof of Theorem~\ref{thm:class} using normal curves which is analogous to Kneser's and Milnor's proof of the existence and uniqueness of the prime decomposition of $3$-manifolds~\cite{Kn29,Mi62}. For more details on the prime decompositions of $3$-manifolds, we also refer to Chapter~3 of~\cite{He76}. In particular, our proof will not use any invariants from the machinery of algebraic topology. On the other hand, we have to appeal to the Schoenflies theorem~\cite{Sc06}, which can be shown by elementary methods (see for example Chapter~3 of~\cite{Mo77}). 

In Section~\ref{sec:normal_curves} we will briefly introduce the necessary background on normal curves. Using normal curves we will show in Section~\ref{sec:prime_surfaces} that $T^2$ is a prime surface (i.e. it cannot be decomposed non-trivially as a connected sum) and that the only other prime surface is $S^2$. To conclude the proof of Theorem~\ref{thm:class} we will show in Sections~\ref{sec:existence} and~\ref{sec:uniqueness} that any surface admits a unique prime decomposition. Finally, in Section~\ref{sec:higher_dimensions} we will briefly relate our approach to results for higher dimensional manifolds.

\subsection*{Conventions:} We work in the smooth category. All manifolds, maps, and ancillary objects are assumed to be smooth. All arguments would also work in the PL category. 

\section{Normal curves on triangulated surfaces}\label{sec:normal_curves}

The idea to prove Theorem~\ref{thm:class} is that the collection of curves on a given surface $F$ will determine $F$. To actually make the collection of curves into an object that we can handle effectively we will restrict to a simple subclass of curves that contain still all interesting curves but admit a nice countable structure. These are the so-called normal curves.

In this section we will introduce the needed concepts about normal curves. More on normal curves can be found in Section~3.2 of~\cite{Ma07}, another source is~\cite{Sc14}. 

First, we recall the concept of a singular triangulation of a surface.
We denote by $\Delta$ a $2$-dimensional triangle seen as a subspace of $\R^2$. A (singular) \textbf{simplicial complex} is obtained by taking finitely many copies of $\Delta$ and identifying all their edges in pairs via affine homeomorphisms. A (singular) \textbf{triangulation} $T$ of a surface $F$ is a homeomorphism of $F$ to a singular simplicial complex. 

\begin{rem}
 In contrast to a genuine triangulation a singular triangulation needs much fewer triangles to build a given surface. Since the theory of normal curves works equally well for singular triangulations, we choose to work with singular triangulations here.
\end{rem}

\begin{figure}[htbp] 
	\centering
	\def\svgwidth{0,5\columnwidth}
	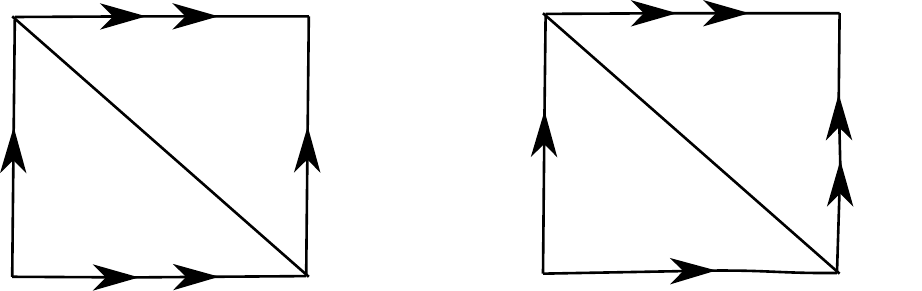
	\caption{Singular triangulations of $T^2$ (left) and $S^2$ (right) consisting of two triangles each. On the other hand, it is not hard to show that the minimal number of triangles needed to genuine triangulate $S^2$, respectively $T^2$, is $4$, respectively $14$.}
	\label{fig:singular_triangulations}
\end{figure}

We denote by $F$ always a surface with a fixed choice of a triangulation. We denote the triangles in the triangulation of $F$ by
\begin{equation*}
    \{\Delta_1,\ldots,\Delta_n\}.
\end{equation*}

A closed $1$-dimensional submanifold $c$ of $F$ is called \textbf{curve system} and if $c$ is connected just \textbf{curve}. By general position, we can assume after a small perturbation of $c$ that $c$ intersects the edges $E$ of the triangles transversely and does not intersect the vertices $V$. Then it follows from the Schoenflies theorem~\cite{Sc06} that any component of $c\cap \Delta_i$ is either a closed curve bounding a disk in the interior of $\Delta_i$, an embedded arc that connects two points on the same edge of $\Delta_i$ or an embedded arc connecting two different edges of $\Delta_i$, as depicted in Figure~\ref{fig:normal_curves}.

\begin{figure}[htbp] 
	\centering
	\def\svgwidth{0,85\columnwidth}
	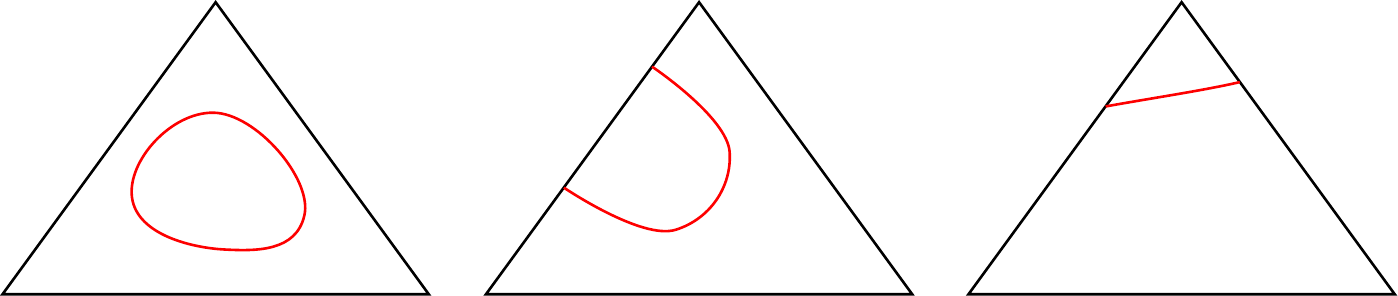
	\caption{Possible components of the intersection $c\cap \Delta$ of a curve system $c$ with a triangle $\Delta$. If only the right configuration (or its rotations) occurs, $c$ is called normal.}
	\label{fig:normal_curves}
\end{figure}

A curve system $c$ on $F$ is called \textbf{normal} if for any triangle $\Delta_i$ of the triangulation every component of $c\cap \Delta_i$ is an arc connecting two different edges of $\Delta_i$.

Actually, it is not a real restriction to restrict to normal curves as the next result shows.

\begin{thm}\label{thm:normalization}
Any curve system c on a surface $F$ can be deformed into a normal curve system by isotopy and possibly removing components bounding disks in $F$.
\end{thm}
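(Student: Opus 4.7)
The plan is to induct on $n(c) := |c \cap E|$, the total number of transverse intersection points of $c$ with the $1$-skeleton $E$ of the triangulation. After a small isotopy I may assume $c$ is in general position, so that $c$ meets $E$ transversely, avoids the vertex set $V$, and every component of $c \cap \Delta_i$ falls into one of the three types depicted in Figure~\ref{fig:normal_curves}.

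As a preliminary clean-up I would discard every component of $c$ that is disjoint from $E$. Such a component is a simple closed curve in the interior of some triangle $\Delta_i$, and by the Schoenflies theorem it bounds a disk in $\Delta_i \subset F$, so the statement permits its removal. After this step no component of $c \cap \Delta_i$ can be a closed curve, since any such curve would necessarily be a full component of $c$ lying in the interior of $\Delta_i$ and thus already removed. Hence every component of $c \cap \Delta_i$ is an arc, either of type 2 or of type 3. The base case $n(c)=0$ now forces $c = \emptyset$, which is trivially normal.

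For the inductive step I assume $n(c)>0$ and that $c$ is not already normal. Then some triangle $\Delta_i$ contains a type 2 arc with both endpoints on a common edge $e$, and among all such arcs I would choose one, $a$, that is \emph{innermost} in the following sense: of the two disks into which $a$ separates $\Delta_i$, the one $D \subset \Delta_i$ whose boundary is $a$ together with the sub-arc of $e$ between the endpoints of $a$ contains no other component of $c \cap \Delta_i$. An innermost choice exists because $c \cap \Delta_i$ has only finitely many components. Using $D$ as a guide, I would then isotope $c$ inside a small neighborhood of $D$, pushing $a$ across $D$ and just past $e$ into the adjacent triangle $\Delta_j$. The isotopy erases precisely the two endpoints of $a$ from $c \cap e$, leaves $c \cap e'$ unchanged for every other edge $e'$, and therefore decreases $n(c)$ by $2$.

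The main technical point to handle is what the push does inside $\Delta_j$. The two arcs of $c \cap \Delta_j$ abutting $e$ at the endpoints of $a$ get joined with the displaced copy of $a$ into a new piece of $c$. In the generic case these are two distinct arcs of $c \cap \Delta_j$ and the result is a single new arc in $\Delta_j$. In the degenerate case, however, they belong to a single type 2 arc of $c \cap \Delta_j$ having the same two endpoints as $a$, and the union becomes a simple closed curve lying in the interior of $\Delta_j$; by Schoenflies it bounds a disk in $\Delta_j \subset F$, so I would remove it. In both sub-cases the resulting system $c'$ still meets $E$ transversely, satisfies $n(c') = n(c) - 2$, and has no component disjoint from $E$ other than the loop just removed, so the inductive hypothesis applies and delivers the required normal curve system. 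This closed-loop bookkeeping, together with the innermost choice of $a$ that guarantees the surgery can be carried out locally inside $\Delta_i$ without running into other components, is the main obstacle the proof has to overcome.
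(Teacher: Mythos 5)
Your proposal is correct and follows essentially the same two-move algorithm as the paper: remove closed components bounding disks inside a triangle, and push innermost arcs with both endpoints on a single edge across that edge. The only cosmetic difference is the termination measure: the paper tracks $d(c) = (\text{number of components of } c) + |c\cap E|$ so that removing a closed loop also strictly decreases the complexity, whereas you induct on $|c\cap E|$ alone and handle closed loops as a preliminary cleanup plus an explicit check after each push; both bookkeeping schemes work. You also correctly isolated the degenerate case in which the push closes up a single type-2 arc in the neighboring triangle into a disk-bounding loop, and correctly argued that the innermost choice prevents any other component from obstructing the local isotopy.
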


\begin{proof}
Let $c$ be a curve system on $F$. We will describe an algorithm, called \textbf{normalization}, that deforms $c$ into a normal curve system.

To show that our algorithm terminates in finite time, we define the complexity $d(c)$ of a curve system $c$ as the sum of the number of components of $c$ and the intersection points of $c$ with the edges of the triangulation. We emphasize the obvious fact that $d$ is bounded from below by $0$.

\noindent \textbf{Step 1:} If one component of $c$ bounds a disk inside a triangle $\Delta$ as in Figure~\ref{fig:normal_curves} (left), we remove that component. This will decrease the complexity $d(c)$ by one and thus after finitely many executions of Step~$1$, we end up with a curve system, $c'$ without configurations as on the left of Figure~\ref{fig:normal_curves}. 

The curve system $c'$ is not isotopic to $c$ but only differs from $c$ by finitely many curves that bound disks in $F$. This is the only place where the curves are not changed by isotopy. For simplicity we denote $c'$ again by $c$.

\noindent \textbf{Step 2:} If there exists a triangle $\Delta$ such that one component of $c\cap\Delta$ is an arc that connects two points on the same edge of $\Delta$ as in the middle of Figure~\ref{fig:normal_curves} we take the innermost of those components and isotope it into the nearby triangle, as shown in Figure~\ref{fig:normalization}. (Here we use again the Schoenflies theorem.) This isotopy will reduce a pair of intersection points with an edge and thus reduce the complexity $d(c)$ by two. Then we continue again with Step~$1$. 

\begin{figure}[htbp] 
	\centering
	\def\svgwidth{0,99\columnwidth}
	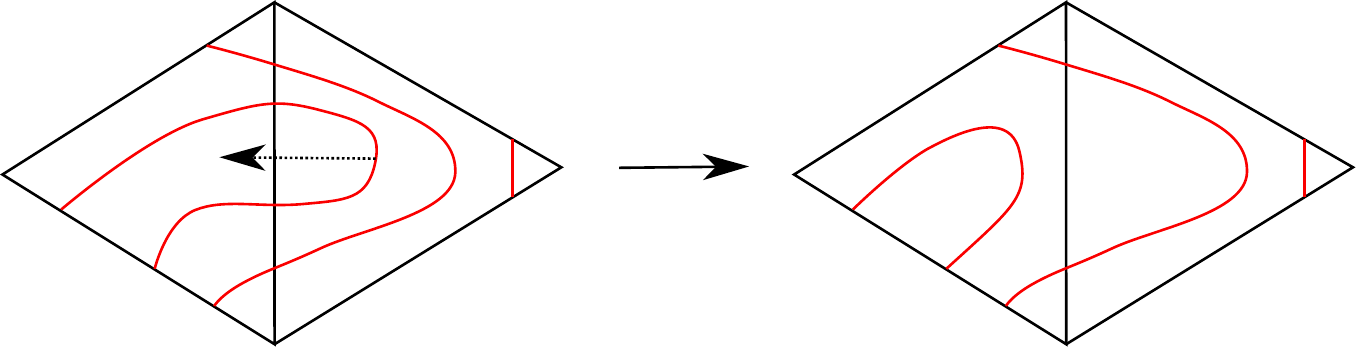
	\caption{Step $2$ in the normalization process removes two intersection points of $c$ with an edge of the triangulation.}
	\label{fig:normalization}
\end{figure}

Since in both steps the complexity is reduced, the algorithm will terminate after finitely many executions and end with a curve system $c$ that is normal.
\end{proof}

We remark that a curve (if it bounds a disk) might be deformed to the empty set by the above algorithm. For an \textbf{essential} curve (i.e. one that does not bound a disk) the above deformation is always an isotopy and in particular, we can always choose a representative that is normal.

One main feature of normal curves is that we can describe them uniquely (up to isotopy through normal curves) by non-negative integer solutions of simple linear equations. For that we enumerate the angles in all triangles of the triangulation by $1,\ldots, 3n$ and assign to a given normal curve system $c$ an integral vector $x(c)\in\N_0^{3n}$ with $i$-th entry $x_i$ the number of arcs in $c$ that run through the $i$-th angle.

We observe that for any edge $e$ of the triangulation there are exactly $4$ adjacent angles. Let $x_i$ and $x_j$ the coordinates belonging to the angles on one side of $e$ and $x_k$ and $x_l$ the coordinates belonging to the other side of $e$. Then we observe that $x_i+x_j=x_k+x_l$, since the number of components of $c$ on both sides of $e$ has to be equal. This yields the so-called \textbf{matching system}
\begin{align*}
    &x_i+x_j=x_k+x_l,  \text{ for all } x_i,x_j,x_k,x_l \text{ belonging to the same edge},\\
    &x_i\geq0, \text{ for } 1\leq i\leq 3n.
\end{align*}

\begin{thm}\label{thm:matchingSystem}
The map 
\begin{align*}
    \big\{\text{normal curves}\big\}/_\sim&\longrightarrow \big\{\text{solutions of the matching system}\big\}\\
    c&\longmapsto x(c)
\end{align*}
is a bijection from the set of normal curves (up to normal isotopy) to the solutions of the matching system.
\end{thm}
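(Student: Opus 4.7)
The plan is to establish the bijection by constructing an explicit inverse: starting from a solution $x$ of the matching system I will build a normal curve system $c$ with $x(c) = x$ (surjectivity), and I will argue that two normal curve systems sharing the same $x$-vector are normally isotopic (injectivity).

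For surjectivity I would proceed triangle by triangle. In each triangle $\Delta_i$ with vertices $A, B, C$ and associated angle-coordinates $x_A, x_B, x_C$, I would draw $x_A$ pairwise disjoint parallel arcs running through the angle at $A$ (each connecting the two edges adjacent to $A$), and similarly $x_B$ arcs through the angle at $B$ and $x_C$ through the angle at $C$. Nesting these three families so that the arcs through a given vertex lie closest to that vertex, they embed disjointly in the disk $\Delta_i$. Along any edge $e$ of the triangulation the number of arc-endpoints contributed from the triangle on one side is $x_i + x_j$, which by the matching equation equals the number $x_k + x_l$ contributed from the opposite side. I would then glue the arcs across $e$ by pairing the endpoints in their natural order along $e$. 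The result is a closed, embedded $1$-submanifold whose restriction to each triangle is a union of normal arcs, hence a normal curve system $c$ with $x(c) = x$.

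For injectivity, suppose $c, c'$ are normal curve systems with $x(c) = x(c')$. Within each triangle $\Delta_i$, both restrict to systems of disjoint normal arcs with the same counts $x_A, x_B, x_C$. The key local claim is that any two such configurations in the disk $\Delta_i$ are isotopic through families of normal arcs, with endpoints allowed to slide along the edges but never to cross a vertex; this I would prove by inducting on the total number of arcs and peeling off innermost arcs via the Schoenflies theorem. Applying the claim in every triangle, I would first normally isotope $c$ and $c'$ to the standard nested model described above. The two systems then induce the same ordered partition of endpoints on each edge $e$ (namely $x_i$ points near one vertex and $x_j$ near the other), so a final normal isotopy supported in collar neighborhoods of the edges synchronizes their positions, yielding a normal isotopy from $c$ to $c'$.

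The main obstacle is piecing the independent local isotopies into a single global normal isotopy without creating transient self-intersections on edges or pushing endpoints past vertices. I would handle this by performing the endpoint-sliding moves first, in collar neighborhoods of the edges chosen compatibly from both sides, and only afterwards carrying out the interior rel-boundary isotopies inside each triangle, where the standard disk-arc isotopies apply unobstructed.
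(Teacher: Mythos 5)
Your proof is correct and follows essentially the same approach as the paper: constructing an explicit inverse by drawing $x_i$ parallel arcs through each angle and gluing them uniquely across edges. In fact, your argument is more detailed than the paper's terse proof, which only sketches the inverse construction and leaves the injectivity (that two normal curve systems with the same angle-counts are normally isotopic) implicit; your local-to-global isotopy argument via the nested standard model in each triangle fills in exactly that gap.
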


\begin{proof}
To construct the inverse we assign to any solution of the matching system a normal curve as follows. For any angle $i$ we draw $x_i$ parallel arcs that join the sides of the angle. If $x$ is a solution of the matching system, the number of arcs coming from one side to an edge equals the number of arcs coming from the other side. Thus there is a unique way to connect these arcs to a normal curve system $c$ belonging to the solution $x$.
\end{proof}

\begin{ex}\label{ex:matchingsystemT2}
We consider the triangulation of $T^2$ shown in Figure~\ref{fig:T2} and fix the labeling of the angles as depicted in that figure. 
\begin{figure}[htbp] 
	\centering
	\def\svgwidth{0,85\columnwidth}
	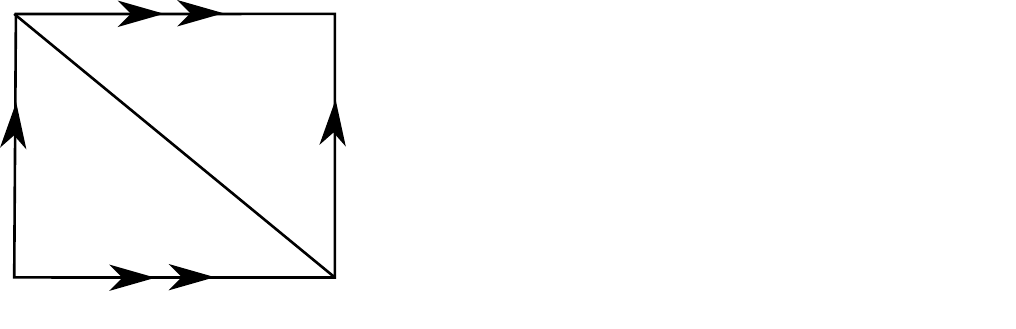
	\caption{Left: A singular triangulation of $T^2$ and a labeling of its angles. Right: An example of a normal curve system.}
	\label{fig:T2}
\end{figure}
Then the matching system is given by
\begin{align*}
    x_1+x_3=x_2+x_4,\\
    x_1+x_5=x_2+x_6,\\
    x_3+x_5=x_4+x_6,\\
    x_1,\ldots, x_6\in\N_0.
\end{align*}
We solve the matching system and set $m:=x_1=x_2$, $l:=x_3=x_4$, and $k:=x_5=x_6$. Thus we see that the set of normal curves $\mathcal N$ on $T^2$ is given by
\begin{align*}
   \mathcal N=\big\{ C_{m,l,k}=(m,m,l,l,k,k)\,\big\vert\, m,l,k\geq0 \big\}.
\end{align*}
By Theorem~\ref{thm:normalization} every essential curve on $T^2$ is contained in $\mathcal N$.
\end{ex}

\section{Classification of prime surfaces}\label{sec:prime_surfaces}

In this section, we will classify all prime surfaces. More concretely we will show that any prime surface is diffeomorphic to $S^2$ or $T^2$. We will start by showing that these two surfaces are prime.

We recall that a surface $F$ is \textbf{prime} if there is no description of $F$ as $F_1\#F_2$ for surfaces $F_1$ and $F_2$ that are both not diffeomorphic to $S^2$. And thus a surface is prime if and only if it does not contain a \textbf{reducing} curve, i.e.\ an essential and separating curve.

\begin{lem}
$S^2$ is prime.
\end{lem}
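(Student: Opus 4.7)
The plan is to show directly that $S^2$ contains no reducing curve, using the characterization given just before the statement. Since a reducing curve must be essential (not bound a disk) and separating, it suffices to show that every simple closed curve on $S^2$ bounds a disk; that is, there are no essential curves on $S^2$ at all. This immediately rules out any reducing curve and shows that $S^2$ is prime.

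First, I would invoke the Schoenflies theorem, which the paper has already set up as a permissible tool: any smooth (or PL) simple closed curve $c \subset S^2$ bounds an embedded $2$-disk in $S^2$. In fact, $c$ bounds a disk on each of its two sides, so $c$ is automatically both separating and inessential. Hence the class of essential curves on $S^2$ is empty, and therefore the class of reducing curves is empty. By the characterization of primeness recalled just above the lemma, $S^2$ is prime.

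To double-check consistency with the normal-curve framework developed in Section~\ref{sec:normal_curves}, one could alternatively argue as follows: by Theorem~\ref{thm:normalization} every essential curve is isotopic to a normal curve, so it would suffice to show that the matching system for the two-triangle singular triangulation of $S^2$ from Figure~\ref{fig:singular_triangulations} has only solutions whose associated normal curves bound disks. This is the approach taken in the next lemma for $T^2$, and it would give a self-contained argument independent of Schoenflies. However, since Schoenflies is already assumed freely throughout the paper (and was used to justify the form of $c \cap \Delta_i$ in the first place), the one-line argument above is the natural one, and no obstacle arises: the content of the lemma is essentially the content of Schoenflies.
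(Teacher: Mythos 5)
Your argument is essentially the paper's own: invoke Schoenflies to conclude that every simple closed curve on $S^2$ bounds a disk, so there are no essential curves and hence no reducing curves. The additional sketch of a matching-system alternative is a reasonable sanity check but not needed; the one-line Schoenflies argument is exactly what the paper does.
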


\begin{proof}
The Schoenflies theorem~\cite{Sc06} implies that any curve on $S^2$ bounds a disk and thus $S^2$ does not contain any essential curve. 
\end{proof}

\begin{lem}\label{lem:T2prime}
$T^2$ is prime.
\end{lem}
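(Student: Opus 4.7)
The plan is to show that $T^2$ has no essential separating simple closed curve. Suppose for contradiction that $c \subset T^2$ is such a curve. By Theorem~\ref{thm:normalization} and Example~\ref{ex:matchingsystemT2}, after an isotopy we may assume $c = C_{m,l,k}$ for some $(m,l,k) \neq (0,0,0)$.

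Since $c$ separates $T^2$, the complement $T^2 \setminus c$ admits a $2$-coloring whose colors alternate across each arc of $c$. Within each of the two triangles of the triangulation, the arcs of $c$ carve out a central region and, at each vertex, a nested sequence of corner strips; the $2$-coloring inside a triangle is determined by the color of its central region, with the strips alternating colors as one moves outward from the vertex. Requiring that these local colorings agree along every edge segment on each of the three triangulation edges produces three linear conditions modulo~$2$, which together are equivalent to $m \equiv l \equiv k \pmod 2$.

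Next, I would use the fact that $c$ is connected to further restrict $(m,l,k)$. Tracing the arcs of $C_{m,l,k}$ through the two triangles, using the matching system to pair endpoints on each edge, one shows that the only parity-uniform $C_{m,l,k}$ that is also connected is $C_{1,1,1}$: if $m, l, k$ all have the same parity but are not all equal to $1$, the arcs split into several parallel sub-curves. So $c = C_{1,1,1}$.

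Finally, for $c = C_{1,1,1}$ I claim that $c$ bounds a disk. The component $F_-$ of $T^2 \setminus C_{1,1,1}$ containing the unique vertex $V$ of the triangulation is the union of the six innermost corner regions, one at each of the six angles at $V$. These regions are themselves disks, and they glue together cyclically around $V$ along their shared edge segments emanating from $V$. By iteratively attaching disks to a disk along boundary arcs, which preserves disk-ness as a consequence of the Schoenflies theorem, $F_-$ is a disk. Hence $c$ bounds a disk, contradicting the essentiality of $c$.

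The main obstacle is the intermediate combinatorial step pinning down $(m,l,k) = (1,1,1)$: it amounts to a precise component count for normal curves on $T^2$, which requires systematic tracking of arc endpoints on each edge via the matching system and a case analysis ruling out the values of $(m,l,k)$ that yield disconnected parallel sub-curves.
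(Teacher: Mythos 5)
Your argument is correct and takes a genuinely different route from the paper's. Both proofs start the same way: normalize the putative reducing curve $c$ so that $c = C_{m,l,k}$ via Theorem~\ref{thm:normalization} and Example~\ref{ex:matchingsystemT2}. From there the paper observes (Figure~\ref{fig:T22}) that if all of $m,l,k\geq 1$ then $C_{m,l,k}$ has a trivial hexagonal component around the vertex, reduces to the case that some coordinate vanishes, rewrites everything in the meridian--longitude basis as $p\mu+q\lambda$, and invokes the classical fact that such a curve is connected if and only if $p,q$ are coprime and that it then has connected complement. You instead impose the \emph{separating} hypothesis first, via the observation that the complement of a connected separating curve is $2$-colorable, translate this into the parity condition $m\equiv l\equiv k\pmod 2$ by comparing the alternating colorings of corner strips across each of the three edges, and then use \emph{connectedness} to reduce to $C_{1,1,1}$, which visibly bounds a disk. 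I checked your parity condition (each edge yields a constraint of the form $A-B\equiv\ast\pmod 2$ relating the central-region colors $A,B$ of the two triangles, and eliminating $A-B$ gives exactly $m\equiv l\equiv k$), and the reduction to $C_{1,1,1}$ is also correct: if all three are even, any coordinate being zero forces the other two to share a factor of $2$, while if all three are odd one peels off $\min(m,l,k)$ parallel hexagons and lands back in the even case unless $(m,l,k)=(1,1,1)$. Your approach buys you a proof of primeness that never introduces the basis $(\mu,\lambda)$ and stays entirely within the combinatorics of the matching system; the paper's approach buys the stronger, reusable statement that every essential curve on $T^2$ is isotopic to $p\mu+q\lambda$ with $\gcd(p,q)=1$, which is quoted again in the Remark following the lemma. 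Both proofs defer a similar combinatorial lemma (for you, the component count of $C_{m,l,k}$; for the paper, the claim that $p\mu+q\lambda$ is connected with connected complement iff $p,q$ coprime), and you flag this honestly; if you want to close it, the peeling-off-hexagons description above, plus the elementary observation that $m\mu+l\lambda$ has $\gcd(m,l)$ parallel components, is all that is needed. One small wording issue: the singular triangulation has a single vertex, so ``at each vertex'' in your second paragraph should read ``at each corner of the triangle''.
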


\begin{proof}
We will show that any essential curve on the $2$-torus is non-separating. For that, we use the triangulation and the matching system $\mathcal N$ from Example~\ref{ex:matchingsystemT2}. By Theorem~\ref{thm:normalization} any essential curve is isotopic to a normal curve and Theorem~\ref{thm:matchingSystem} implies that any essential curve is given by a solution of the matching system $\mathcal N$.

First, we observe in Figure~\ref{fig:T22} that any normal curve $C_{m,l,k}$ with $m,l,k\geq1$ contains a curve that bounds a disk and is thus non-essential. So it is enough to consider only normal curves with at least one of $m,l$, or $k$ vanishing.

\begin{figure}[htbp] 
	\centering
	\def\svgwidth{0,85\columnwidth}
	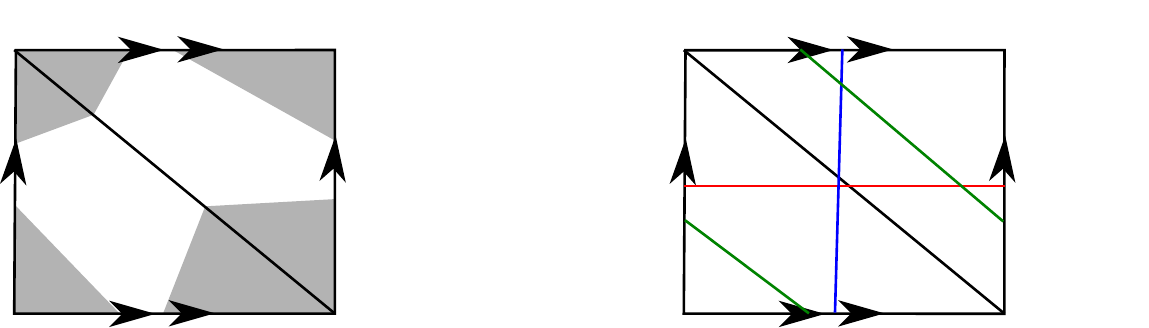
	\caption{Left: Every normal curve system with $m,l,k\geq1$ contains a curve that bounds a disk. Right: 3 simple solutions of the matching system that represent essential curves on $T^2$.}
	\label{fig:T22}
\end{figure}

We choose the standard meridian-longitude pair $(\mu,\lambda)$ on $T^2$ inducing the standard orientation on $T^2$ such that $\mu=C_{1,0,0}$ and $\lambda=C_{0,1,0}$ (as unoriented curves), see Figure~\ref{fig:T22}. Then it follows that $C_{0,0,1}=\mu-\lambda$. Therefore, we have
\begin{align*}
    C_{m,l,0}&=m\mu+l\lambda,\\
    C_{m,0,k}&=(m+k)\mu-k\lambda,\\
    C_{0,l,k}&=k\mu+(l-k)\lambda.
\end{align*}
We remark here that $C_{m,l,0}=m\mu+l\lambda$ and $C_{0,l+m,m}=m\mu+l\lambda$ are isotopic (although not isotopic as normal curves). Similarly $C_{m,0,k}=(m+k)\mu-k\lambda$ and $C_{0,m,m+k}=(m+k)\mu-k\lambda$ are isotopic.

It follows that any essential curve on $T^2$ can be written (up to isotopy) as $p\mu+q\lambda$ with $p\in\N_0$ and $q\in\Z$. Finally, it is not hard to see that a normal curve system $p\mu+q\lambda$ is connected if and only if $p$ and $q$ are coprime and in that case the complement is connected. We have shown that $T^2$ does not contain any essential separating curve.
\end{proof}

\begin{rem}
That any essential curve on a $2$-torus is isotopic to a curve of the form $p\mu+q\lambda$ for $p$ and $q$ coprime is already a non-trivial result, see for example~\cite[Chapter~2]{Ro76} for a different proof.
\end{rem}

Next, we will show that these two examples are actually the only prime surfaces.

\begin{prop}\label{prop:prime_surfaces}
Let $F$ be a prime surface, then $F$ is diffeomorphic to $S^2$ or $T^2$.
\end{prop}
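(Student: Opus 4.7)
The plan is to assume $F$ is a prime surface and show $F \cong S^2$ or $F \cong T^2$, splitting into two cases based on whether $F$ contains an essential simple closed curve.

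First I consider the case that every simple closed curve on $F$ bounds a disk. Picking any small embedded circle $C$ around some point of $F$, it bounds a small disk $A$ on one side and a compact subsurface $\overline{V}$ on the other. I would show $\overline{V}$ must also be a disk: otherwise $\overline{V}$ would contain a simple closed curve $C'$ that fails to bound a disk in $\overline{V}$. Any putative disk $\tilde{D} \subset F$ bounded by $C'$ could, by an innermost-disk argument using the Schoenflies theorem, be modified to be disjoint from $C$ and hence lie entirely inside $\overline{V}$, contradicting the choice of $C'$. Hence $\overline{V}$ is a disk and $F = A \cup \overline{V} \cong S^2$.

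Second, suppose $F$ does contain an essential simple closed curve $c$. Primeness forces $c$ to be non-separating, since otherwise $c$ would be a reducing curve. Cutting $F$ along $c$ yields a connected compact surface $F_c$ with two boundary circles $c_1, c_2$; cap these off with disks $D_1, D_2$ to obtain a closed surface $\hat{F}$. The heart of the argument is the identification $F \cong \hat{F} \mathbin{\#} T^2$. To establish it, I would connect $D_1$ to $D_2$ in $\hat{F}$ by an embedded arc and thicken it to a rectangle, so that $D_1 \cup (\text{rectangle}) \cup D_2$ is itself a disk $\tilde{D} \subset \hat{F}$ by the Schoenflies theorem. Recovering $F$ from $\hat{F}$ amounts to replacing $\operatorname{int}(D_1) \cup \operatorname{int}(D_2)$ by the annular neighborhood of $c$; inside $\tilde{D}$ this swaps a disk for a once-punctured torus, exhibiting the connected sum $F \cong \hat{F} \mathbin{\#} T^2$.

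Since $F$ is prime and $T^2 \not\cong S^2$ (the latter admits no essential curve, while the former does, by the preceding lemmas), the decomposition $F \cong \hat{F} \mathbin{\#} T^2$ forces $\hat{F} \cong S^2$. Hence $F \cong S^2 \mathbin{\#} T^2 \cong T^2$, concluding the proof. The main obstacle I anticipate is the identification $F \cong \hat{F} \mathbin{\#} T^2$, which, while topologically natural, requires a careful Schoenflies-based argument to convert the $1$-handle attachment along $c$ into a connected sum with $T^2$.
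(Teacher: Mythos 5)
Your approach is genuinely different from the paper's. Instead of reducing a polygonal description of $F$ inductively, you dichotomize directly on the existence of an essential curve and, in the essential case, cut along it and re-cap. Your second case is correct and arguably cleaner than the paper's corresponding step: the paper first produces a curve $c$ whose complement is connected, then a dual curve $c'$ meeting it once, and identifies a regular neighborhood of $c\cup c'$ with $T^2\setminus D^2$; your observation that cutting along any non-separating curve and capping gives $\hat F$ with $F\cong \hat F\# T^2$, and then invoking primeness to force $\hat F\cong S^2$, accomplishes the same thing with less scaffolding and no appeal to a polygonal description.

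The gap is in your first case. You assert that if $\overline V$ (a compact, connected, orientable surface with one boundary circle) is not a disk, then it contains a simple closed curve $C'$ that fails to bound a disk in $\overline V$. This is precisely the classification statement for surfaces-with-boundary, phrased contrapositively, and it is not a consequence of the Schoenflies theorem alone; in fact it is the nontrivial content that the paper supplies by a separate argument (building a polygonal description of $F$ and collapsing, so that either a non-separating normal curve appears or the polygon reduces to a bigon). Without some such argument, your Case 1 assumes what needs to be proved. There is also a secondary issue: even granting such a $C'$, the innermost-disk argument does not always make the disk $\tilde D$ it bounds disjoint from $C$. Since $C'\cap C=\emptyset$, either $\tilde D$ misses $C$ entirely, or $C\subset\operatorname{int}(\tilde D)$, in which case the sub-disk of $\tilde D$ cut off by $C$ can be shown to equal $A$; then $\tilde D\setminus\operatorname{int}(A)$ is an annulus in $\overline V$ between $C'$ and $\partial\overline V$, i.e.\ $C'$ is boundary-parallel. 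So you actually need a $C'$ that is neither disk-bounding nor boundary-parallel in $\overline V$, which is a stronger existence claim and still rests on the unproved classification of $\overline V$. To repair Case 1 in the spirit of the paper, replace the existence claim about $C'$ by the polygonal-description and collapsing argument applied to $\overline V$ (or to $F$ directly), which is exactly what the paper's proof does.
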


\begin{proof}
First, we show that any triangulated surface $F$ admits a polygonal description, i.e. $F$ can be obtained from a triangulated convex polygon $P$ in the plane by identifying pairs of boundary edges of the polygon. We construct $P$ inductively. We start with an arbitrary triangle $\Delta_1$ in the triangulation of $F$ and set $P_1=\Delta_1$. Next, we glue to one of its edges a different adjacent triangle $\Delta_2$ to obtain the polygon $P_2$, which we can deform to be convex. We continue inductively by gluing an adjacent triangle $\Delta_i$ to the polygon $P_{i-1}$ along one of its edges to get the polygon $P_i$ which we again deform to be convex. Here we always choose a triangle $\Delta_i$ that is not yet contained in $P_{i-1}$. It is straightforward to show that after finitely many steps we are left with a triangulated polygon $P$ in the plane that contains all triangles in the triangulation. From $P$ we can obtain $F$ by identifying pairs of boundary edges.

Now we assume that $F$ is prime. We take two edges $e_1$ and $e_2$ of the triangulated polygon that get identified and choose an embedded normal arc $c$ in the polygon connecting $e_1$ and $e_2$ that induces a normal curve on $F$, see Figure~\ref{fig:prime}. Such a normal arc $c$ exists by performing the normalization process from the proof of Theorem~\ref{thm:normalization} in this setting. 

\begin{figure}[htbp] 
	\centering
	\def\svgwidth{0,85\columnwidth}
	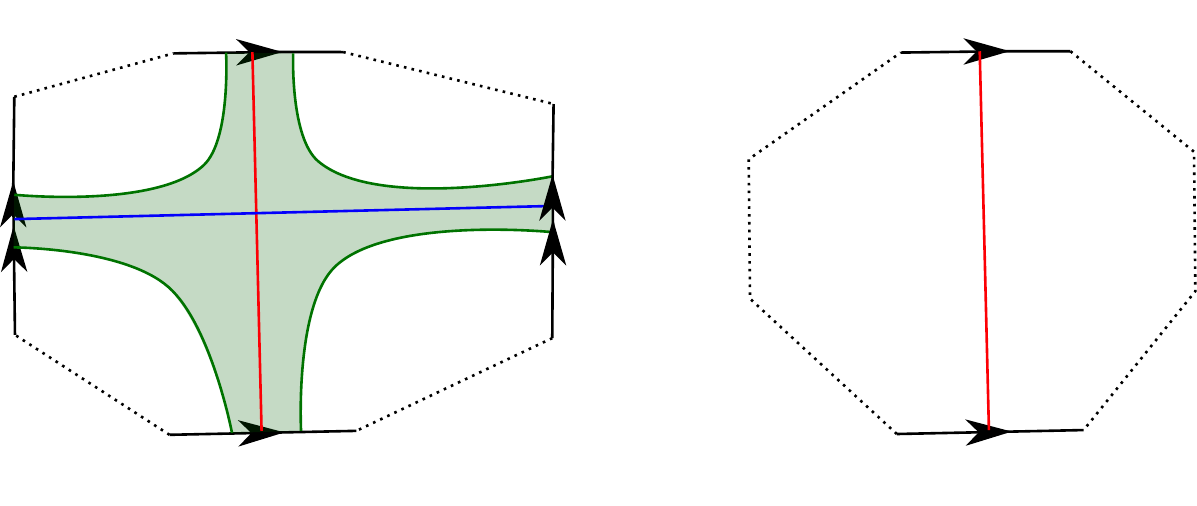
	\caption{Left: If a prime surface $F$ contains a curve $c$ whose complement is connected it is diffeomorphic to $T^2$. Right: If the complement of $c$ is disconnected we can simplify its polygonal description by collapsing $F_1=D^2$.}
	\label{fig:prime}
\end{figure}

First, we discuss the case that $F\setminus c$ is path-connected. Then we can find a normal curve $c'$ intersecting $c$ transversely in a single point, see Figure~\ref{fig:prime} (left). Since $F$ is orientable a regular neighborhood $\nu$ of $c\cup c'$ is diffeomorphic to $T^2\setminus D^2$. Now we observe that the boundary $\partial \nu$ of $\nu$ is a curve in $F$ and since $F$ is prime $\partial\nu$ has to bound a disk in $F$ and we conclude that $F$ is diffeomorphic to $T^2$.

In the other case, when $F\setminus c$ consists of two components $F_1$ and $F_2$, we will argue that we can simplify the triangulated polygon. Indeed, since $F$ is prime, we know that $c$ bounds a disk in $F$, and thus one of the components of its complement, say $F_1$, has to be a disk $D^2$. It follows that collapsing $F_1$ to a point will not change the topology of $F$. Thus we get a polygon $P'$ with fewer edges also describing $F$ as follows. We cut $P$ along $c$, remove the part of $P$ that belongs to $F_1$, and collapse the part of the boundary belonging to $c$, $e_1$ and $e_2$, see Figure~\ref{fig:prime} (right).

After finitely many of those steps we either find a normal curve with connected complement and deduce that $F$ is diffeomorphic to a $2$-torus or we reduce the polygon to the bigon polygon describing $S^2$.
\end{proof}

\section{The existence of the prime decomposition for surfaces}\label{sec:existence}

In this section, we will show that any surface can be decomposed into prime surfaces. This implies that any surface is diffeomorphic to the connected sum of some number of $2$-tori. 

\begin{thm}\label{thm:existence}
Any surface $F$ can be decomposed into prime factors, i.e. $F$ is diffeomorphic to $F_1\#\ldots\#F_g$ with $F_i$ prime surfaces.
\end{thm}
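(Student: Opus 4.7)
The plan is to mimic Kneser's prime decomposition argument for $3$-manifolds. If $F$ is already prime, there is nothing to do. Otherwise, I would fix a maximal system $\mathcal C = \{c_1, \ldots, c_k\}$ of pairwise disjoint, pairwise non-isotopic reducing curves in $F$, cut $F$ along $\mathcal C$, and cap off each of the $2k$ resulting boundary circles with a disk. This yields $k+1$ closed connected orientable surfaces $F_1, \ldots, F_{k+1}$ with
\[
F \cong F_1 \# \ldots \# F_{k+1},
\]
and any $F_j$ diffeomorphic to $S^2$ can be discarded, since $S^2$ acts as a unit for the connected sum.

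Each remaining $F_j$ must then be prime: any reducing curve $c$ in $F_j$ can be isotoped off the capping disks to give a curve in $F$ disjoint from $\mathcal C$, and since $c$ is contained in a single component of $F \setminus \mathcal C$ it cannot be isotopic to any $c_i$, contradicting the maximality of $\mathcal C$.

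The main obstacle is to show that a maximal collection $\mathcal C$ is finite, which I would establish in the style of Kneser's finiteness theorem using normal curves. Apply Theorem~\ref{thm:normalization} simultaneously to the disjoint system $c_1 \sqcup \ldots \sqcup c_k$: the Step~2 isotopies on innermost bigons can be chosen locally to preserve disjointness, and Step~1 discards nothing since each $c_i$ is essential. Thus we may assume every $c_i$ is normal and the $c_i$ remain pairwise disjoint. By Theorem~\ref{thm:matchingSystem} the union $c_1 \cup \ldots \cup c_k$ corresponds to a vector in $\mathbb N_0^{3n}$, where $n$ is the number of triangles. In each triangle the arcs of the system fall into at most three parallel families, and two consecutive arcs within a single family cobound a rectangular strip. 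The key observation is that if $c_i$ and $c_j$ occupy adjacent positions within every family they share, the corresponding strips glue together across the triangulation edges into an embedded annulus in $F$ cobounded by $c_i$ and $c_j$; this would force $c_i$ and $c_j$ to be isotopic, contradicting the choice of $\mathcal C$. A pigeonhole argument on the finite set of arc slots across the $n$ triangles therefore bounds $k$ in terms of $n$, so $\mathcal C$ is finite and the proof is complete.
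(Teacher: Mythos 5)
Your overall strategy coincides with the paper's: both are Kneser-style arguments that reduce the existence of a prime decomposition to a finiteness statement for disjoint, pairwise non-isotopic reducing curves, proved by normalizing the curve system. Framing it via a \emph{maximal} collection $\mathcal C$ and arguing primeness of the capped-off pieces from maximality is a clean formulation of what the paper does implicitly by ``repeatedly cut until only prime pieces remain,'' and your primeness argument is essentially correct (one should add a word on why a reducing curve of $F_j$ that avoids the capping disks is still essential and separating in $F$, which follows from an innermost-disk argument and the tree structure of the dual graph of $F\setminus\mathcal C$).

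However, the finiteness step has a genuine gap. Your ``key observation'' --- that if $c_i$ and $c_j$ are adjacent in every family they share, the strips between them glue to an annulus --- does not suffice: in a family where $c_i$ appears but $c_j$ does not, the strip adjacent to $c_i$ on the $c_j$-side is bordered by some third curve, so the strips need not assemble into an annulus cobounded by $c_i$ and $c_j$. Moreover, ``a pigeonhole argument on the finite set of arc slots'' is not a bound on $k$: there are at most $3n$ parallel families, but each family may contain arbitrarily many arcs, and it is unclear why some pair of curves must be mutually adjacent in all shared families. The correct pigeonhole (which the paper uses) is over the \emph{complementary regions}, not the curves. Since every $c_i$ separates $F$, the dual graph of $F\setminus\mathcal C$ is a tree and hence $F\setminus\mathcal C$ has exactly $k+1$ components. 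In each triangle $\Delta$, the complement $\Delta\setminus\mathcal C$ consists of quadrilateral strips plus at most four ``exceptional'' pieces (the up-to-three corner pieces at the vertices and the one central piece), so there are at most $4n$ exceptional pieces in all of $F$. If $k+1>4n$, some component of $F\setminus\mathcal C$ contains no exceptional piece, hence is built entirely from quadrilaterals glued along triangulation edges; by orientability it is an annulus, and its two boundary circles are isotopic members of $\mathcal C$, a contradiction. This yields the bound $k\le 4n$ and closes the gap.
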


\begin{proof}
The proof follows the naive approach of repeatedly cutting the surface along separating essential curves until only prime pieces remain. We need to argue why this process stops after finite time. (In fact, this process will, in general, not stop in higher dimensions, see Section~\ref{sec:higher_dimensions}.)

For that, we will show that there exists for every surface $F$ a natural number $g$, such that for any curve system $c$ consisting of $g$ disjoint separating essential curves  $c_1,\ldots c_g$ at least two curves $c_i$ and $c_j$ are isotopic.

We choose a triangulation $\{\Delta_1,\ldots,\Delta_n\}$ of $F$. By Theorem~\ref{thm:normalization} we can assume that $c$ is normal. We observe that for any triangle $\Delta_i$ in the triangulation the complement $\Delta_i\setminus c$ of $c$ consists of quadrilaterals $Q_j$ and at most four other exceptional pieces $E_j$, see Figure~\ref{fig:existence} (left). 
\begin{figure}[htbp] 
	\centering
	\def\svgwidth{0,99\columnwidth}
	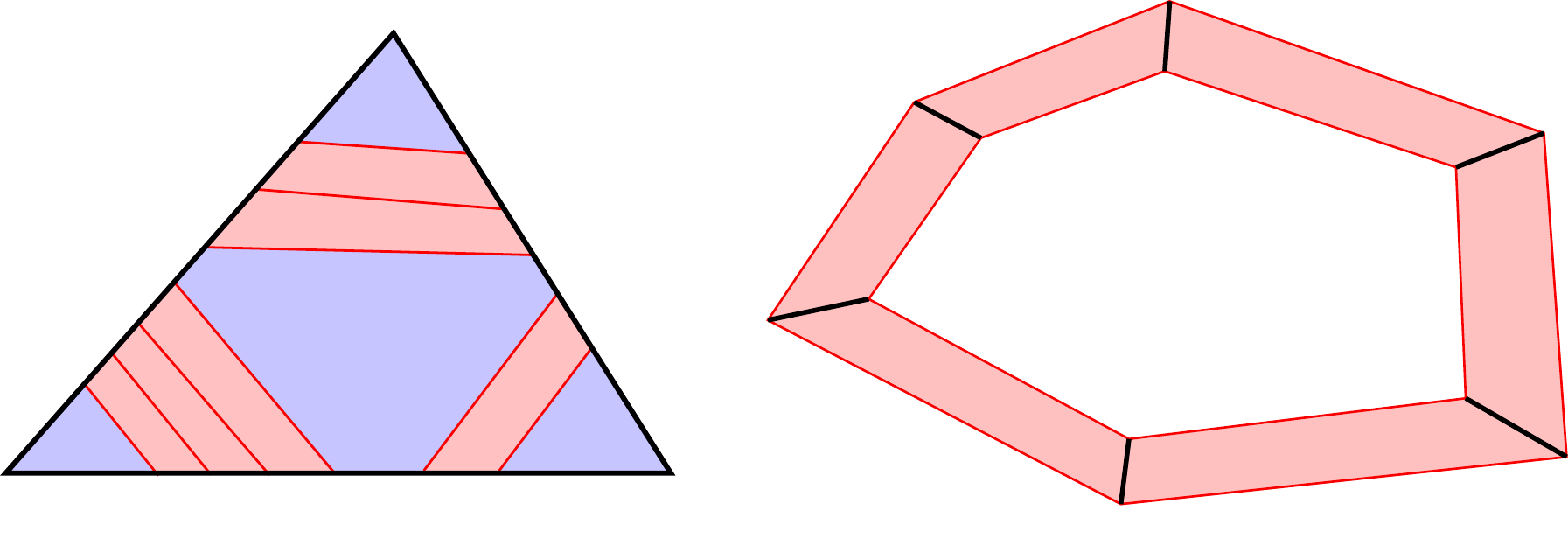
	\caption{Left: $\Delta_i\setminus c$ consists of quadrilaterals $Q_j$ (in red) and at most four exceptional pieces $E_j$ (in blue). Right: The component $F_k$ is obtained by gluing together only quadrilaterals and thus forms an annulus between two components $c_i$ and $c_j$.}
	\label{fig:existence}
\end{figure}
Every quadrilateral has the property that two opposite edges are subsets of $c$ and the other two edges are subsets of the edges of the triangulation. Since the triangulation consists of $n$ triangles $\Delta_i$ we see that the complement of $c$ decomposes as
\begin{equation*}
    F\setminus c=\bigcup\limits_{j=1}^{m} Q_j \cup \bigcup\limits_{j=1}^{l} E_j,
\end{equation*}
where $l$, the total number of exceptional pieces, is at most $4n$ and thus bounded by a quantity that only depends on the triangulation of the surface $F$ and not on $c$. On the other hand, we know that all $c_i$ are separating and thus $F\setminus c$ decomposes as disjoint union of $g+1$ pieces, i.e.
\begin{equation*}
    F\setminus c= F_1\sqcup \cdots\sqcup F_{g+1}.
\end{equation*}
If we now choose $g\geq4n\geq l$, it follows that at least one $F_k$ does not contain any exceptional pieces $E_j$ and thus $F_k$ is obtained by gluing together quadrilaterals $Q_j$, see Figure~\ref{fig:existence} (right). Since the surface $F$ is orientable, we conclude that $F_k$ is an annulus between two curves $c_i$ and $c_j$ and thus $c_i$ is isotopic to $c_j$. 
\end{proof}

As a direct corollary, we obtain the following.

\begin{cor}
For any surface $F$ there exists a non-negative integer $g$ such that $F$ is diffeomorphic to $\#_g T^2$.
\end{cor}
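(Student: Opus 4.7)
The plan is to combine the two main results already proved in Sections~\ref{sec:prime_surfaces} and~\ref{sec:existence}. By Theorem~\ref{thm:existence}, any surface $F$ admits a prime decomposition $F \cong F_1 \# \cdots \# F_g$ with each $F_i$ a prime surface. By Proposition~\ref{prop:prime_surfaces}, each prime factor $F_i$ is diffeomorphic either to $S^2$ or to $T^2$. So the only remaining content is to simplify such a connected sum.

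The key fact to invoke is that $S^2$ is the neutral element for the connected sum operation on surfaces: for any surface $X$, one has $X \# S^2 \cong X$, because forming $X \# S^2$ amounts to removing an open disk from $X$ and gluing in the complement of a disk in $S^2$, which is itself a disk; up to diffeomorphism this just restores $X$. Using this, I would cancel every $S^2$ factor appearing in the decomposition $F_1 \# \cdots \# F_g$, leaving a connected sum in which every factor is a $T^2$. Writing $g'$ for the number of remaining $T^2$ factors, we obtain $F \cong \#_{g'} T^2$, where the convention $\#_0 T^2 = S^2$ (recorded right after Theorem~\ref{thm:class}) covers the case where every prime factor was an $S^2$.

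This is essentially a bookkeeping step with no real obstacle; the only (mild) point to address is justifying $X \# S^2 \cong X$, which follows directly from the Schoenflies theorem already used throughout the paper, since it guarantees that any embedded $2$-disk in $S^2$ has disk complement. No further algebraic-topology input is needed, which is consistent with the invariant-free philosophy of the note.
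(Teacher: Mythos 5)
Your proof is correct and follows the same route as the paper: combine Theorem~\ref{thm:existence} with Proposition~\ref{prop:prime_surfaces}. You additionally spell out the cancellation of $S^2$-factors via $X \# S^2 \cong X$, which the paper leaves implicit, but this is the same argument.
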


\begin{proof}
From Theorem~\ref{thm:existence} we know that $F$ is diffeomorphic to $F_1\#\ldots\#F_{g'}$ with $F_i$ prime surfaces. But from Proposition~\ref{prop:prime_surfaces} we know that any $F_i$ is diffeomorphic to a $2$-torus or a $2$-sphere.
\end{proof}

\section{The uniqueness of the prime decomposition for surfaces}\label{sec:uniqueness}

To finish the proof of Theorem~\ref{thm:class} it remains to argue that the genus of a surface is actually an invariant of its diffeomorphism type. In this section, we will achieve this by showing that the prime decomposition of a surface is unique (up to adding $S^2$-factors).

\begin{thm} \label{thm:unique}
If $\#_g T^2$ is diffeomorphic to $\#_{g'} T^2$ then $g$ and $g'$ are equal.
\end{thm}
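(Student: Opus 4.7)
The plan is to prove the theorem by induction on $\min(g,g')$, via a cancellation property for connected sum with $T^2$:
\begin{equation*}
    F_1 \# T^2 \cong F_2 \# T^2 \ \Longrightarrow\ F_1 \cong F_2. \qquad (\star)
\end{equation*}
Granting $(\star)$, the induction is straightforward. The base case $\min(g,g')=0$ reduces to showing that $S^2\cong \#_{g'} T^2$ forces $g'=0$. If $g'\ge 1$, then $\#_{g'} T^2$ contains a non-separating essential curve, namely a meridian in the last torus factor, while by the Schoenflies theorem every simple closed curve in $S^2$ bounds a disk and is therefore separating, a contradiction. For the inductive step with $g,g'\ge 1$, writing both sides as $(\#_{g-1}T^2)\#T^2$ and $(\#_{g'-1}T^2)\#T^2$ and applying $(\star)$ gives $\#_{g-1} T^2 \cong \#_{g'-1} T^2$, and the induction hypothesis yields $g=g'$.

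To establish $(\star)$, set $G:=F_1\#T^2\cong F_2\#T^2$ and let $c_1,c_2\subset G$ be reducing curves realising the two decompositions, so that each $c_i$ cuts $G$ into a once-punctured torus and a copy of $F_i\setminus D^2$. Fix a triangulation of $G$ and, using Theorem~\ref{thm:normalization}, normalize $c_1\cup c_2$. The strategy is to isotope $c_1$ and $c_2$ until they become disjoint, through an intersection-reducing argument analogous to Kneser's and Milnor's innermost-disk reductions in three dimensions. Concretely, when $c_1\cap c_2\ne\emptyset$, we locate a bigon between $c_1$ and $c_2$ whose interior contains no further intersections; such a bigon can be found inside a complementary component of $c_2$ by exploiting primality of the complementary pieces (Lemma~\ref{lem:T2prime}) together with the Schoenflies theorem. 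Eliminating the bigon via an ambient isotopy strictly decreases $|c_1\cap c_2|$, so after finitely many steps we reach the disjoint case.

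Once $c_1\cap c_2=\emptyset$, a short case analysis based on Lemma~\ref{lem:T2prime} and Example~\ref{ex:matchingsystemT2} forces the subsurface cobounded by $c_1$ and $c_2$ to be an annulus; otherwise one would produce a reducing curve sitting inside a once-punctured torus, contradicting the primality of $T^2$. Consequently $c_1$ is isotopic to $c_2$ in $G$, and cutting along either gives the same complement, so $F_1\setminus D^2\cong F_2\setminus D^2$, and capping with a disk yields $F_1\cong F_2$.

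The main technical obstacle will be the bigon-finding step. Unlike the three-dimensional setting where innermost disks come from the sphere-boundary of a complementary ball, here the complementary pieces are once-punctured tori and copies of $F_i\setminus D^2$, each carrying non-trivial topology. Producing the bigon requires carefully combining primality of $T^2$, the normal-curve classification from Section~\ref{sec:prime_surfaces}, and the Schoenflies theorem to rule out essential-arc configurations that would obstruct intersection reduction.
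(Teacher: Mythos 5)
Your induction framework and base case coincide with the paper's, but there is a genuine gap in the inductive step, namely in the claimed intersection reduction. You propose to isotope the two reducing curves $c_1$ and $c_2$ until they are disjoint by locating and eliminating bigons. Two essential separating curves on a surface need not be isotopic to disjoint curves: they can have positive geometric intersection number, and once placed in minimal position there are no innermost bigons left to remove. Concretely, in $G$ of genus $2$ take $c_1$ a standard separating curve and set $c_2=\tau_\alpha^n(c_1)$, where $\tau_\alpha$ is a Dehn twist about a non-separating curve $\alpha$ meeting $c_1$ twice; both $c_1$ and $c_2$ cut $G$ into two once-punctured tori, yet for large $n$ they cannot be made disjoint by any ambient isotopy, so neither primality of $T^2$ nor the Schoenflies theorem can produce the bigon you need. (The Kneser--Milnor reduction you appeal to is also not an isotopy: it replaces one reducing sphere by a \emph{different} sphere via cut-and-paste along an innermost disk, after which one must argue which new piece to keep and that it still realises an equivalent decomposition. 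Your appended note correctly flags that obstacle as the hard part, but the ambient-isotopy version cannot even get started.)

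The paper sidesteps this entirely by never trying to disjoin two separating curves. It picks \emph{non-separating} curves $c\subset F_g$ and $c'\subset F'_{g'}$ (meridians of the last $T^2$-summands, chosen away from the connected-sum disks), invokes Lemma~\ref{lem:Dehn} to obtain a diffeomorphism of $F$ carrying $c$ to $c'$, restricts it to $F\setminus c\to F\setminus c'$, and extends over the capping disks using the Alexander trick (Lemma~\ref{lem:Alex}) to get a diffeomorphism $F_1\#\cdots\#F_{g-1}\cong F'_1\#\cdots\#F'_{g'-1}$; the induction then closes exactly as in your outline. To repair your write-up, either replace the bigon/isotopy step with this change-of-coordinates argument via Lemma~\ref{lem:Dehn}, or carry out the actual Kneser--Milnor cut-and-paste surgery on reducing curves with a careful justification of which surgered curve survives.
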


For the proof of Theorem~\ref{thm:unique} we will need the following two well-known observations about diffeomorphisms of surfaces.

\begin{lem}[Alexander trick]\label{lem:Alex}
    Any diffeomorphism of $S^1=\partial D^2$ extends to a diffeomorphism of $D^2$.
\end{lem}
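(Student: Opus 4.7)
The plan is to classify diffeomorphisms of $S^1$ up to isotopy, reduce to the orientation-preserving case, and then build the extension using a cutoff function so that smoothness at the origin is automatic. The classical Alexander coning formula $F(rx) := rf(x)$ extends any homeomorphism of $S^1$ to a homeomorphism of $D^2$, but in general this is not even $C^1$ at the origin (it fails whenever $f$ is not a rotation). Overcoming this failure is the main obstacle.

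First I would reduce to the orientation-preserving case: the reflection $\rho(e^{i\theta}) = e^{-i\theta}$ extends to the reflection $(x,y)\mapsto (x,-y)$ of $D^2$, so after replacing $f$ by $\rho\circ f$ if necessary I may assume $f$ preserves orientation. Then I would lift $f$ to a diffeomorphism $\tilde f\colon\mathbb{R}\to\mathbb{R}$ with $\tilde f(\theta+2\pi)=\tilde f(\theta)+2\pi$ and $\tilde f'>0$. The linear interpolation
$$\tilde f_t := (1-t)\,\mathrm{id} + t\,\tilde f$$
satisfies $\tilde f_t'(\theta) = (1-t) + t\tilde f'(\theta) > 0$ and is $2\pi$-equivariant in $\theta$, so it descends to a smooth isotopy $f_t\colon S^1\to S^1$ from $f_0=\mathrm{id}$ to $f_1=f$ through diffeomorphisms.

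To extend, I would fix a smooth cutoff $\phi\colon [0,1]\to[0,1]$ with $\phi\equiv 0$ on $[0,\tfrac13]$ and $\phi(1)=1$, and define
$$F(re^{i\theta}) := r\, e^{i\,\tilde f_{\phi(r)}(\theta)}, \qquad F(0):=0.$$
On the closed disk of radius $\tfrac13$, the map $F$ is literally the identity, so smoothness at the origin comes for free. Away from the origin, polar coordinates $(r,\theta)$ form a smooth chart, and $F$ is smooth in those coordinates because $(r,\theta,t)\mapsto \tilde f_t(\theta)$ is smooth and $\phi$ is smooth. The inverse is given by the same formula with $\tilde f_{\phi(r)}^{-1}$ in place of $\tilde f_{\phi(r)}$, which is likewise smooth since each $\tilde f_t$ is a diffeomorphism of $\mathbb{R}$ depending smoothly on $t$. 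Hence $F$ is a diffeomorphism of $D^2$ with $F|_{\partial D^2}=f$.

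The single technical point — smoothness at $0$ — is exactly where the naive cone fails, and the cutoff $\phi$ is what repairs it: it buys an $\mathrm{id}$-collar around the origin, while the isotopy $f_t$ propagates the boundary data smoothly outward to match $f$ at $r=1$.
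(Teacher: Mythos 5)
Your proof is correct and follows essentially the same route as the paper's: reduce to the orientation-preserving case via a reflection, lift to $\R$, take the linear interpolation $\tilde f_t = (1-t)\,\mathrm{id} + t\tilde f$ as an isotopy from the identity to $f$, and use that isotopy on a collar of the boundary to extend over $D^2$. You spell out the collar extension and the smoothness check at the origin more explicitly via the cutoff $\phi$, and you skip the paper's (unnecessary) preliminary normalization $f(0)=0$, but these are cosmetic differences rather than a genuinely different argument.
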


\begin{proof}
Let $f\colon S^1\rightarrow S^1$ be a diffeomorphism. After possibly composing with a reflection that extends over $D^2$ we can assume that $f$ is orientation preserving. We will show that $f$ is isotopic to the identity. First, we observe that after an isotopy we can assume that $f$ preserves $0\in S^1=\R/{2\pi \Z}$. We write $\tilde f$ for the lift of $f$ to a map $\R\rightarrow \R$. Then we observe that the linear interpolation between $\tilde f$ and the identity is equivariant and thus induces an isotopy between $f$ and the identity. By performing this isotopy on a tubular neighborhood of $S^1$ in $D^2$ we deduce that $f$ extends over $D^2$.
\end{proof}

\begin{lem}\label{lem:Dehn}
    Let $c$ and $c'$ be non-separating curves on a surface $F$. Then there exists a diffeomorphism of $F$ that sends $c$ to $c'$.
\end{lem}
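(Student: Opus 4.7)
The plan is to cut $F$ along each of $c$ and $c'$, identify each cut as a connect-sum decomposition of $F$ with a $T^2$-summand, and then match the two decompositions.

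First, since $c$ is non-separating, $F\setminus c$ is path-connected, so a short arc crossing $c$ transversely from one side to the other can be closed up through this complement into a curve $a$ meeting $c$ transversely in a single point. Exactly as in the proof of Proposition~\ref{prop:prime_surfaces}, a regular neighborhood $N:=\nu(c\cup a)$ is then diffeomorphic to the once-punctured torus $T^2\setminus D^2$, with boundary a single circle $\gamma=\partial N\subset F$. Capping off the complement $F\setminus N$ with a disk yields a closed surface $F_0$, and $F$ is recovered (up to diffeomorphism) as $F_0\# T^2$, under an identification in which $c$ is realized as the standard meridian of the $T^2$-summand. The same construction applied to $c'$ produces a dual curve $a'$, a once-punctured torus $N'$, a boundary circle $\gamma'$, a closed surface $F_0'$, and an identification $F\cong F_0'\# T^2$ in which $c'$ is the standard meridian.

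Assuming we have a diffeomorphism $\psi\colon F_0\to F_0'$ that sends the capped-off disk of $F_0$ to the capped-off disk of $F_0'$, the Alexander trick (Lemma~\ref{lem:Alex}) lets us adjust $\psi$ so that its boundary behavior on $\gamma$ matches that on $\gamma'$ precisely. Gluing the identity map on the standard $T^2\setminus D^2$ back in along $\gamma\to\gamma'$ then produces a self-diffeomorphism of $F$ sending $c$ to $c'$. So the entire difficulty is concentrated in producing such a $\psi$.

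The main obstacle is therefore showing $F_0\cong F_0'$. Both are closed orientable surfaces, so by Theorem~\ref{thm:existence} and Proposition~\ref{prop:prime_surfaces} each has the form $\#_h T^2$; however one cannot directly conclude $F_0\cong F_0'$ from this without circularly appealing to the uniqueness of the prime decomposition. The cleanest way to close the loop is by induction on a measure of complexity of $F$, for instance the number of prime summands in some fixed decomposition produced by Theorem~\ref{thm:existence}. In the base case $F\cong T^2$, Lemma~\ref{lem:T2prime} shows that every essential curve has the form $p\mu+q\lambda$ with $\gcd(p,q)=1$, and any matrix of $\mathrm{SL}(2,\Z)$ whose first column is $\binom{p}{q}$ descends to a self-diffeomorphism of $T^2=\R^2/\Z^2$ taking $\mu$ to that curve. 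The inductive step uses the cut-and-paste decomposition above to replace $F$ by the strictly simpler surfaces $F_0$ and $F_0'$, where the inductive hypothesis applies.
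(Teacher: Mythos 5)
The paper does not prove this lemma from scratch: it cites the standard Dehn-twist argument (Lemma~13.3 of~\cite{PS97}), which for curves meeting once uses an explicit composition of two twists and reduces the general case to that one. You instead try to give a self-contained proof by cut-and-cap, which is a genuinely different route---but it has a gap that is not repaired by the proposed induction.

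You correctly identify the danger of circularity: writing $F\cong F_0\#T^2\cong F_0'\#T^2$ and concluding $F_0\cong F_0'$ is exactly (a special case of) the uniqueness Theorem~\ref{thm:unique}, which in the paper is proved \emph{using} this very lemma. Your fix is to induct ``on the number of prime summands in some fixed decomposition.'' Two problems. First, the inductive hypothesis you set up is a statement about a \emph{self-diffeomorphism of a single surface} carrying one non-separating curve to another; but what the inductive step requires is the statement $F_0\cong F_0'$, an identification between two a priori different surfaces. That is a different kind of assertion and simply is not supplied by the hypothesis as you have formulated it. To make the induction go, you would have to strengthen it to a simultaneous induction proving both the present lemma and a cancellation statement of the form ``$F_0\#T^2\cong F_0'\#T^2\Rightarrow F_0\cong F_0'$'' for all surfaces below the current complexity; that restructuring is precisely the missing content, not a cosmetic point. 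Second, the complexity measure itself is shaky: without uniqueness you have no control relating the length of \emph{a} prime decomposition of $F_0$ (or $F_0'$) to the length of the fixed one you chose for $F$, so ``strictly simpler'' is not established. In short, the attempt is well-motivated and the circularity is correctly diagnosed, but the inductive scheme as written does not close the loop; either carry out the simultaneous induction explicitly, or (as the paper does) prove the lemma independently via Dehn twists, reducing to the intersection-number-one case where a neighborhood of $c\cup c'$ is a punctured torus and the required map is built inside it and extended by the identity.
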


\begin{proof}
It is not hard to explicitly construct such a diffeomorphism as a composition of Dehn twists. Instead of repeating this standard argument here we refer for example to Lemma 13.3 in~\cite{PS97} for a detailed proof.
\end{proof}

\begin{proof}[Proof of Theorem~\ref{thm:unique}]
Let $F$ be a surface that is diffeomorphic to two connected sum decompositions, to $F_1\#\cdots\# F_g$ and to $F'_1\#\cdots\# F'_{g'}$, with $F_1,\ldots,F_g$ and $F'_1,\ldots,F'_{g'}$ prime surfaces that are not $2$-spheres.
    From Proposition~\ref{prop:prime_surfaces} it follows that any $F_i$ and any $F'_i$ is diffeomorphic to a $2$-torus. From the Schoenflies theorem, it follows that any curve on $S^2$ separates $S^2$. On the other hand, for any $g\geq1$ there exists a curve on $\#_g T^2$ that does not separate. Thus $g=0$ if and only if $g'=0$ and we assume without loss of generality that $g,g'>0$.

    Next, we want to show that if we remove a prime surface, say $F_g=T^2$ and $F'_{g'}=T^2$, from both connected sum decompositions, the resulting surfaces are still diffeomorphic. Then we can conclude by induction. For that we choose a curve $S^1\times \{p\}\subset T^2$ that is mapped under the diffeomorphisms to $F_g$ and $F'_{g'}$ to curves $c\subset F_g$ and $c'\subset F'_{g'}$ that are disjoint from the disks in $F_g$ and $F'_{g'}$ used for the connected sums. In particular, $c$ and $c'$ also represent curves on $F$. If we cut $F$ along $c$ and glue two $2$-disks to the resulting boundary components we get $F_1\#\cdots\#F_{g-1}$. Similarly, if we cut $F$ along $c'$ and fill the boundary components with $2$-disks, we obtain $F'_1\#\cdots\#F'_{g'-1}$.
Since $c$ and $c'$ are non-separating, Lemma~\ref{lem:Dehn} gives us a diffeomorphism of $F$ that maps $c$ to $c'$. Its restriction $F\setminus c\to F\setminus c'$ extends by Lemma~\ref{lem:Alex} over the glued-in $2$-disks to a diffeomorphism from $F_1\#\cdots\#F_{g-1}$ to $F'_1\#\cdots\#F'_{g'-1}$. We conclude by induction that $g=g'$.    
\end{proof}

\begin{rem} We will close with two remarks.
    \begin{itemize}
        \item Some authors define the genus of a closed, connected, oriented surface as the maximum number of components in a non-separating curve system. Theorem~\ref{thm:unique} implies that a surface of genus $g$ (with this alternative definition) is diffeomorphic to $\#_g T^2$.
        \item Our arguments extend also to non-orientable surfaces. We briefly comment on the necessary changes. Similar as in Lemma~\ref{lem:T2prime} we can show that the real projective plane $\R P^2$ is prime. By extending Proposition~\ref{prop:prime_surfaces} we see that $\R P^2$ is the only non-orientable prime surface~\cite{Sc14}. The existence of the prime decomposition works exactly the same. (Here we only need to notice that a separating curve on a surface always has an orientable neighborhood.) 

        The uniqueness is slightly more involved. If $F$ is a non-orientable surface then it is not hard to see that $F\#T^2$ is diffeomorphic to $F\#\R P^2\#\R P^2$ and thus the prime decomposition is in general not unique. Now let $F$ be a non-orientable surface that is diffeomorphic to two connected sum decompositions, to $F_1\#\cdots\# F_g$ and to $F'_1\#\cdots\# F'_{g'}$, with $F_1,\ldots,F_g$ and $F'_1,\ldots,F'_{g'}$ prime surfaces that are not $2$-spheres. First we use the relation $F\#T^2=F\#\R P^2\#\R P^2$ to replace any $T^2$-summand by an $\R P^2\#\R P^2$-summand. Thus Proposition~\ref{prop:prime_surfaces} implies that all $F_i$ and $F'_i$ are $\R P^2$. Then we use an extension of Lemma~\ref{lem:Dehn} to non-orientable surfaces, to inductively remove $\R P^2\#\R P^2$-summands from the two different connected sum decompositions as in the proof of Theorem~\ref{thm:unique}. Like this we can reduce one of the two connected sum decompositions to $S^2$ or $\R P^2$. From the primeness and the orientability it follows that the other reduced connected sum decomposition is the same.
    \end{itemize}
\end{rem}

\section{Higher dimensional manifolds}\label{sec:higher_dimensions}

Since the proofs presented here are inspired by the $3$-dimensional proofs they naturally extend into dimension $3$, where Kneser and Milnor proved that any closed oriented $3$-manifold admits a unique (up to reordering and adding $S^3$-factors) prime decomposition~\cite{Kn29,Mi62}. On the other hand, it is not hard to show that there exist infinitely many non-homeomorphic prime $3$-manifolds (for example the lens spaces) and thus the classification of $3$-manifolds is not an implication of the prime decomposition theorem. Nevertheless, the algorithmic classification of $3$-manifolds is possible and normal surfaces play a crucial role in them as in many other algorithmic results in $3$-manifold topology. We refer to~\cite{Ha61,Ma07,BBP+} for more details. 
On the other hand, it is not hard to construct for any given finitely presented group $G$ and any integer $n\geq4$ a closed oriented $n$-manifold with fundamental group isomorphic to $G$. Since the isomorphism problem of finitely presented groups is unsolvable~\cite{No55,Bo58} there is also no algorithmic classification of manifolds of dimension at least $4$. Nevertheless, manifolds of dimension at least $5$ are classified, although not algorithmically, via surgery theory, see for example~\cite{Br72, Wa70}. 

Finally, we refer to~\cite{BC+19} for a discussion of the prime decompositions of higher dimensional manifolds. In particular, we want to emphasize the observation that the connected sum decomposition of higher dimensional smooth manifolds is often not unique and the naive approach of cutting a manifold along essential separating spheres will in general not terminate uniquely in finite time. Here we mention two different ways to see this. First, we have the Kervaire--Milnor theorem that states that there are exactly $28$ non-diffeomorphic smooth $7$-manifolds that are all homeomorphic to $S^7$ and that these $28$ manifolds together with the connected sum form a group isomorphic to $\Z_{28}$~\cite{KM63}. Second, we know that for any closed simply-connected smooth $4$-manifold $W$ there exist natural numbers $a,b,c,d$ such that $W\#_a\C P^2\#_b -\C P^2$ is diffeomorphic to $\#_c\C P^2\#_d-\C P ^2$, where $\C P^2$ denotes the complex projective space and $-\C P^2$ the complex projective space with opposite orientation~\cite{GS99}.

\end{document}